\newcommand{\J}{ \mathbb{J}}
\newcommand{\norm}[1]{\left\| #1 \right\|}
\newcommand{\inner}[1]{\langle #1 \rangle}
\newcommand{\E}{\mathcal{E}}
\newcommand{\N}{\mathbb{N}}
\newcommand{\h}{\mathcal{H}}
\newcommand{\minimatrix}[4]{\begin{pmatrix} #1 & #2 \\ #3 & #4 \end{pmatrix}  }
\newcommand{\twovector}[2]{\begin{pmatrix} #1\\#2 \end{pmatrix} }
\renewcommand{\phi}{\varphi}
\newtheorem{Corollary}{Corollary}
\newtheorem{Theorem}{Theorem}
\newtheorem{Lemma}{Lemma}
\theoremstyle{definition}
\newtheorem*{Definition}{Definition}
\newtheorem{Example}{Example}
\begin{document}
    \title{The Norm and Modulus of a Foguel Operator}

    \author{Stephan Ramon Garcia}
    \address{   Department of Mathematics\\
            Pomona College\\
            Claremont, California\\
            91711 \\ USA}
    \email{Stephan.Garcia@pomona.edu}
    \urladdr{http://pages.pomona.edu/\textasciitilde sg064747}
    
    \keywords{Complex symmetric operator,  Foguel operator, Hankel operator, Foguel-Hankel operator,
    	power bounded operator, polynomially bounded operator, similarity, contraction, Golden Ratio, conjugation.}
    \subjclass[2000]{47Axx, 47Bxx, 47B99}
    
    \thanks{Partially supported by National Science Foundation Grant DMS-0638789.}

    \begin{abstract}
    	We develop a method for calculating the norm and the spectrum of the modulus
	of a Foguel operator.  In many cases, the norm can be computed exactly.  In others,
	sharp upper bounds are obtained.  In particular, we observe several connections between
	Foguel operators and the Golden Ratio.
    \end{abstract}

\maketitle

\section{Introduction}

\begin{Definition}
	Let $\E$ denote a separable Hilbert space and let $S$
	denote the unilateral shift on $l^2_{\E}(\N)$.
	A \emph{Foguel operator} is an operator on $l^2_{\E}(\N) \oplus l^2_{\E}(\N)$ of the form
	\begin{equation}\label{eq-Foguel}
		R_{T} = \minimatrix{S^*}{T}{0}{S}
	\end{equation}
	where $T \in B(l^2_{\E}(\N))$.
	More generally, we refer to an operator of the form
	\begin{equation}\label{eq-nthFoguel}
		R_{T,n} = \minimatrix{(S^*)^n}{T}{0}{S^n}
	\end{equation}
	as a \emph{Foguel operator of order $n$}.
\end{Definition}

Although it is clear that a Foguel operator of order $n$ with underlying
space $\E$ can be represented as a Foguel operator on the $n$-fold
direct sum $\E^{(n)}$, we maintain the option of employing the
notation \eqref{eq-nthFoguel} since it will be convenient in what follows.

The study of such operators was implicitly initiated by Foguel \cite{Foguel}, who 
provided a counterexample to a well-known conjecture of Sz.-Nagy \cite{SZ}.  Specifically,
Foguel demonstrated the existence of a power bounded operator which is not similar to a contraction.
A simplification of Foguel's construction was almost immediately produced 
by Halmos \cite{HalmosFoguel}, who explicitly introduced operators of the form 
\eqref{eq-Foguel}.

In the years since those influential papers, Foguel operators
have proved to be an almost unlimited source of counterexamples to
important conjectures.  Most strikingly,
Foguel operators (where the underlying space $\E$ is infinite dimensional and the 
operator $T$ in \eqref{eq-Foguel} is a vectorial Hankel operator) feature prominently in Pisier's
celebrated solution to Halmos' polynomially bounded operator 
problem \cite{Pisier}.  The interested reader is advised to consult
the recent text \cite[Ch. 15]{Peller}, the expository note \cite{McJ}, 
and the influential papers \cite{AleksandrovPeller, Bourgain}.

In this note, we introduce a method for computing the norm of a Foguel operator $R_T$.
Moreover, we are also concerned with the spectral analysis of the modulus $|R_T|$.
Recall that the  \emph{modulus} of a bounded operator $A$ is defined to be
the selfadjoint operator $|A| = \sqrt{A^*A}$.  We will denote the spectrum of 
$A$ by $\sigma(A)$.

We approach this problem by considering a related system of approximate antilinear eigenvalue problems 
(see Lemma \ref{LemmaSpectrum}).
This technique, introduced in \cite[Thm. 2]{AAEPRI},  can be thought of as 
a complex symmetric adaptation of Weyl's criterion \cite[Thm. VII.12]{RS}.  To be more precise,
we require a few brief words concerning complex symmetric operators.

\begin{Definition}
	Let $\h$ denote a separable complex Hilbert space.
	A \emph{conjugation} is a conjugate-linear operator $C:\h \rightarrow \h$, 
	which is both \emph{involutive} (i.e., $C^2 = I$) and \emph{isometric} (i.e., $\inner{Cx,Cy} = \inner{y,x}$ for 
	all $x,y \in \h$).
\end{Definition}

\begin{Definition}
	We say that $T \in B(\h)$ is \emph{$C$-symmetric}
	if $T = CT^*C$ and \emph{complex symmetric} if there exists a conjugation $C$ with respect to which $T$
	is $C$-symmetric.
\end{Definition}

It turns out that $T$ is a complex symmetric operator if and only if $T$ is unitarily 
equivalent to a symmetric matrix with complex entries,
regarded as an operator acting on an $l^2$-space of the appropriate dimension (see 
\cite[Sect. 2.4]{CCO} or \cite[Prop. 2]{CSOA}).
The class of complex symmetric operators includes all normal operators, 
operators defined by Hankel matrices, compressed Toeplitz operators 
(including finite Toeplitz matrices and the compressed shift), 
and the Volterra integration operator.  For further details, we 
refer the reader to \cite{CSOA, CSO2,  CCO}.
Other recent articles concerning complex symmetric operators include \cite{Chevrot, Gilbreath, Sarason}.
    
From the perspective of this note, the key observation is that many Foguel operators (including Foguel-Hankel operators)
are complex symmetric.  Moreover, it turns out that many questions concerning the norm and
modulus of general Foguel operators can be reduced to the complex symmetric case via a block matrix
argument.  We refrain now from elaborating further on the details.  The techniques
will be explained during the course of our proofs.

\section{Main Results}

Before proceeding, let us point out a few elementary facts about the norm and modulus of
a Foguel operator.   For instance, it is clear that a Foguel operator has nontrivial kernel since
$R_{T} (e_0,0) = (0,0)$ where $e_0 = (1,0,0,\ldots)$.  In particular, this implies that
$0 \in \sigma( |R_T|)$ for any Foguel operator.
Furthermore, since $(S^*)^n$ is a compression of $R_{T,n}$, it also follows immediately that
$\norm{R_{T,n}} \geq 1$.

Our primary tool is the following lemma which characterizes the spectrum
of the modulus of a complex symmetric operator in terms of what one might call an
\emph{approximate antilinear eigenvalue problem}:

\begin{Lemma}\label{LemmaSpectrum}
	If $A$ is a bounded $C$-symmetric operator and $\lambda \neq 0$, then
	\smallskip
	\begin{enumerate}\addtolength{\itemsep}{0.5\baselineskip}\raggedright
		\item $|\lambda| \in \sigma(|A|)$ 
			if and only if there exists a sequence of unit vectors $u_n$ 
			that satisfy the approximate antilinear eigenvalue problem
			\begin{equation}\label{eq-Approximate}
				\lim_{n\rightarrow\infty} \norm{(A - \lambda C)u_n} = 0.
			\end{equation}  

		\item $|\lambda|$ is an eigenvalue of $|A|$ (i.e., a singular value of $A$) if and only if the antilinear eigenvalue problem
			$Au = \lambda Cu$ has a nonzero solution $u$. 
	  \end{enumerate}
\end{Lemma}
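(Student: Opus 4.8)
The plan is to reduce the antilinear spectral problem for the pair $(A,C)$ to an ordinary spectral problem for the positive self-adjoint operator $A^*A = |A|^2$, and then to invoke Weyl's criterion together with the spectral mapping theorem; since $\lambda \neq 0$, the latter gives that $|\lambda| \in \sigma(|A|)$ exactly when $|\lambda|^2 \in \sigma(A^*A)$. The only algebraic inputs are the $C$-symmetry of $A$ in its two equivalent forms $CA = A^*C$ and $AC = CA^*$ (both immediate from $A = CA^*C$ and $C^2 = I$), the isometry $\norm{Cx} = \norm{x}$, and the observation that the forward and reverse computations below are the same with or without limits, so that parts (i) and (ii) are handled in parallel.

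\emph{The forward halves.} Suppose $u_n$ are unit vectors with $\norm{(A - \lambda C)u_n} \to 0$. Applying $C$ and using $CA = A^*C$ turns this into $\norm{A^*(Cu_n) - \bar\lambda u_n} \to 0$, while dividing the original relation by $\lambda$ gives $Cu_n - \lambda^{-1}Au_n \to 0$, whence $A^*(Cu_n) - \lambda^{-1}A^*Au_n \to 0$ after applying the bounded operator $A^*$. Subtracting these and multiplying by $\lambda$ leaves $\norm{(A^*A - |\lambda|^2)u_n} \to 0$, so Weyl's criterion places $|\lambda|^2$ in $\sigma(A^*A)$ and the spectral mapping theorem yields $|\lambda| \in \sigma(|A|)$. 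The same computation applied to a genuine solution of $Au = \lambda Cu$ (with no error terms) gives $A^*Au = |\lambda|^2 u$, so that $|\lambda|^2$ is an eigenvalue of $A^*A$ and hence $|\lambda|$ is an eigenvalue of $|A|$, since $|A| + |\lambda|$ is invertible. This settles the forward implications of (i) and (ii).

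\emph{The reverse halves.} Assume now that $|\lambda| \in \sigma(|A|)$. By the spectral mapping theorem and Weyl's criterion there are unit vectors $v_n$ with $\norm{(A^*A - |\lambda|^2)v_n} \to 0$; in the setting of (ii) we take $v_n = v$ constant with $A^*Av = |\lambda|^2 v$. Motivated by the exact case, set $u_n := v_n + \bar\lambda^{-1}CAv_n$. A short computation using $AC = CA^*$, $C^2 = I$, and the reality of $|\lambda|^2$ shows that
\[
(A - \lambda C)u_n = \bar\lambda^{-1}\,C\big(A^*A - |\lambda|^2\big)v_n,
\]
whose norm equals $|\lambda|^{-1}\norm{(A^*A - |\lambda|^2)v_n} \to 0$, and is identically $0$ in case (ii). If $\liminf_n \norm{u_n} > 0$, we normalize along a suitable subsequence and are done. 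The one genuine obstacle is the degenerate alternative $\norm{u_n} \to 0$: here the identity $\bar\lambda u_n = CAv_n + \bar\lambda v_n$ and the isometry of $C$ give $\norm{(A + \lambda C)v_n} = |\lambda|\,\norm{u_n} \to 0$, and then the unit vectors $iv_n$ do the job, since $(A - \lambda C)(iv_n) = i(A + \lambda C)v_n$. (In the exact setting of (ii): if $v + \bar\lambda^{-1}CAv = 0$, then $(A + \lambda C)v = 0$, so $iv$ is a nonzero solution of $Au = \lambda Cu$.) Passing to the appropriate subsequence produces the required unit sequence for (i) and the required nonzero vector for (ii).

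The degeneracy in the last step is the only point that needs an idea rather than bookkeeping: one must recognize that $\norm{u_n} \to 0$ is not a breakdown but forces $v_n$ itself to be an approximate antieigenvector for $-\lambda$, which is converted back to one for $\lambda$ by the observation that multiplying a vector by $i$ reverses the sign of the antilinear term. Everything else is a routine manipulation of the identities $CA = A^*C$, $AC = CA^*$, $\norm{Cx} = \norm{x}$, Weyl's criterion, and the spectral mapping theorem.
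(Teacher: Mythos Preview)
Your argument is correct. The forward direction is a clean reduction to Weyl's criterion for $A^*A$, and in the reverse direction the ansatz $u_n = v_n + \bar\lambda^{-1}CAv_n$ together with the identity $(A-\lambda C)u_n = \bar\lambda^{-1}C(A^*A-|\lambda|^2)v_n$ is exactly right; the degenerate case is handled correctly by the observation $(A-\lambda C)(iv_n)=i(A+\lambda C)v_n$. One minor phrasing point: the dichotomy is $\liminf_n\norm{u_n}>0$ versus $\liminf_n\norm{u_n}=0$, and in the latter case you first pass to a subsequence along which $\norm{u_n}\to 0$ before invoking the $iv_n$ trick. You do say ``passing to the appropriate subsequence'' at the end, so the logic is sound, but the sentence ``the degenerate alternative $\norm{u_n}\to 0$'' slightly understates what is needed.

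Your route differs from the paper's. The paper does not argue from scratch: it observes that the substitution $v_n = e^{-\frac{i}{2}\arg\lambda}u_n$ converts the condition $\norm{(A-\lambda C)u_n}\to 0$ into $\norm{(A-|\lambda|C)v_n}\to 0$ (because $C$ is conjugate-linear), thereby reducing to the case $\lambda>0$, and then cites \cite[Thm.~2]{AAEPRI} for that case. So the paper's proof is essentially a one-line phase rotation plus an external reference, whereas yours is a self-contained argument built directly on Weyl's criterion and the spectral mapping theorem. The paper's approach is shorter on the page but opaque without the cited source; yours makes the mechanism (the interplay between $CA=A^*C$ and the factorization $A^*A-|\lambda|^2$) fully visible, and in particular your handling of the degenerate case via $v\mapsto iv$ is a nice explicit instance of the same phase-rotation idea that the paper uses globally.
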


\begin{proof}
	The proof follows from \cite[Thm. 2]{AAEPRI}
	and the observation that \eqref{eq-Approximate} holds if and only if 
	the unit vectors $v_n = e^{ -\frac{i}{2} \arg \lambda} u_n$
	satisfy $\lim_{n\rightarrow\infty} \norm{(A - |\lambda| C)v_n} = 0$.
\end{proof}

We also require the following elementary lemma \cite[Pr.76]{Halmos}:

\begin{Lemma}\label{LemmaPair}
	If $A,B \in B(\h)$, then $\sigma(AB) \cup \{0\} = \sigma(BA) \cup \{0\}$.
	In other words, the nonzero elements of the spectra of $AB$ and $BA$ are
	the same.
\end{Lemma}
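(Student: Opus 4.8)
The plan is to reduce the statement to a single invertibility claim: for every nonzero scalar $\lambda$, the operator $\lambda I - AB$ is invertible in $B(\h)$ if and only if $\lambda I - BA$ is. Since $A$ and $B$ enter symmetrically, it is enough to prove one implication, say that $\lambda\neq 0$ and $\lambda\notin\sigma(AB)$ force $\lambda\notin\sigma(BA)$. Granting this, if $\lambda\notin\sigma(AB)\cup\{0\}$ then $\lambda\neq0$ and $\lambda\notin\sigma(AB)$, hence $\lambda\notin\sigma(BA)$ and a fortiori $\lambda\notin\sigma(BA)\cup\{0\}$; this gives $\sigma(BA)\cup\{0\}\subseteq\sigma(AB)\cup\{0\}$, and interchanging $A$ and $B$ yields the reverse inclusion, hence equality.

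To carry out the key implication, I would set $R:=(\lambda I-AB)^{-1}\in B(\h)$, so that $R(\lambda I-AB)=(\lambda I-AB)R=I$, and these are the only properties of $R$ used. The one genuinely non-mechanical step is to guess the candidate inverse of $\lambda I-BA$, namely
\[
R':=\tfrac{1}{\lambda}\bigl(I+BRA\bigr),
\]
which is a bounded operator precisely because $\lambda\neq0$. Using the intertwining identities $B(\lambda I-AB)=(\lambda I-BA)B$ and $A(\lambda I-BA)=(\lambda I-AB)A$ together with $R(\lambda I-AB)=(\lambda I-AB)R=I$, one then checks directly that
\[
(\lambda I-BA)R'=\tfrac{1}{\lambda}\Bigl(\lambda I-BA+B(\lambda I-AB)RA\Bigr)=\tfrac{1}{\lambda}\bigl(\lambda I-BA+BA\bigr)=I,
\]
and symmetrically that $R'(\lambda I-BA)=I$. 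Hence $\lambda I-BA$ is invertible with inverse $R'$, i.e. $\lambda\notin\sigma(BA)$, which completes the argument.

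I do not expect a real obstacle here — the result is classical (see \cite[Pr.76]{Halmos}) — and once the formula for $R'$ is written down, the verification is forced by the resolvent identity; the only care needed is bookkeeping with the intertwiners. An alternative route, more in the spirit of the block-matrix arguments used elsewhere in this note, is to observe that for $\lambda\neq0$ the operators $\minimatrix{AB}{0}{B}{0}$ and $\minimatrix{0}{0}{B}{BA}$ on $\h\oplus\h$ are similar via $\minimatrix{I}{A}{0}{I}$ (whose inverse is $\minimatrix{I}{-A}{0}{I}$), and that a block--triangular operator one of whose diagonal entries is the invertible scalar $-\lambda I$ becomes invertible after subtracting $\lambda I$ exactly when the other diagonal block minus $\lambda I$ is invertible; chaining these equalities of spectra yields $\sigma(AB)\cup\{0\}=\sigma(BA)\cup\{0\}$ directly. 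I would keep this in reserve but present the explicit-inverse proof as the main one, since it sidesteps any discussion of spectra of infinite-dimensional triangular operators.
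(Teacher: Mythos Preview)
Your argument is correct: the candidate inverse $R'=\lambda^{-1}(I+BRA)$ is the standard one, and your verification via the intertwiners $(\lambda I-BA)B=B(\lambda I-AB)$ and $A(\lambda I-BA)=(\lambda I-AB)A$ goes through without issue. The paper does not actually prove this lemma but merely cites \cite[Pr.~76]{Halmos}; the explicit-inverse proof you give is precisely the classical solution recorded there, so your approach coincides with what the paper defers to.
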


Our primary result is a type of spectral mapping theorem for the modulus of a Foguel operator.
This not only provides us with an exact formula for the norm of a Foguel operator 
(Corollary \ref{CorollaryNorm}), it also yields a sharp upper bound on the norm of powers of a 
Foguel operator (Corollary \ref{CorollaryPowers}).

\begin{Theorem}\label{TheoremMain}
	If $T \in B(l^2_{\E}(\N))$, $n \geq 0$, and 
	\begin{equation*}
		R_{T,n} = \minimatrix{(S^*)^n}{T}{0}{S^n},
	\end{equation*}
	then for $\lambda > 0$ we have
	\begin{equation}\label{eq-SpectralMapping}
		\lambda \in \sigma( |R_{T,n}| ) \backslash \{0,1\}
		  \quad \Leftrightarrow \quad |\lambda - \lambda^{-1}| \in \sigma(|T|) \backslash \{0\}.
	\end{equation}	
\end{Theorem}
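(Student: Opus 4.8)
The plan is to reduce to the complex symmetric case and then unwind the approximate antilinear eigenvalue problem of Lemma \ref{LemmaSpectrum}(i) into scalar pieces. Since $0 \in \sigma(|R_{T,n}|)$ always and the asserted equivalence only concerns $\lambda \notin \{0,1\}$, fix $\lambda > 0$ with $\lambda \neq 1$ and set $\mu = \lambda - \lambda^{-1} \neq 0$. The operator $R_{T,n}$ need not be complex symmetric, so the first step is a block-matrix device: fixing any conjugation $J$ on $\h := l^2_\E(\N)\oplus l^2_\E(\N)$, the operator $B := R_{T,n} \oplus (J R_{T,n}^* J)$ on $\h\oplus\h$ is $\mathcal{C}$-symmetric with $\mathcal{C}(x,y) = (Jy,Jx)$ (a direct check, using that $(JR_{T,n}^*J)^* = JR_{T,n}J$), and $\sigma(|B|) = \sigma(|R_{T,n}|) \cup \sigma(|R_{T,n}^*|) = \sigma(|R_{T,n}|)$, because $|R_{T,n}|^2 = R_{T,n}^*R_{T,n}$ and $|R_{T,n}^*|^2 = R_{T,n}R_{T,n}^*$ have the same nonzero spectrum by Lemma \ref{LemmaPair} while $0$ lies in both. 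Thus it suffices to characterize when $\lambda \in \sigma(|B|)$.

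By Lemma \ref{LemmaSpectrum}(i), $\lambda \in \sigma(|B|)$ if and only if there exist unit vectors $w_k = (x_k,y_k)$ with $\norm{(B - \lambda\mathcal{C})w_k} \to 0$; writing $z_k = Jy_k$ (so that $\norm{x_k}^2 + \norm{z_k}^2 = 1$), this is equivalent to the coupled conditions $R_{T,n}x_k - \lambda z_k \to 0$ and $R_{T,n}^*z_k - \lambda x_k \to 0$. Decomposing $x_k = (a_k,b_k)$ and $z_k = (c_k,d_k)$ along $l^2_\E(\N)\oplus l^2_\E(\N)$, these conditions become four asymptotic identities; two of them read $S^n b_k - \lambda d_k \to 0$ and $S^n c_k - \lambda a_k \to 0$, allowing one to replace $d_k$ by $\lambda^{-1}S^n b_k$ and $a_k$ by $\lambda^{-1}S^n c_k$ up to $o(1)$. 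Substituting into the remaining two identities and using $(S^*)^n S^n = I$ collapses the whole system to
\[
T b_k \to \mu c_k, \qquad T^* c_k \to \mu b_k .
\]

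For the forward direction, $\norm{x_k}^2 + \norm{z_k}^2 = 1$ forces $\norm{b_k}^2 + \norm{c_k}^2$ to be bounded below (otherwise $a_k,d_k \to 0$ as well); since $\mu \neq 0$, passing to a subsequence makes both $\norm{b_k}$ and $\norm{c_k}$ bounded below, whence $T^*Tb_k = \mu T^*c_k + o(1) = \mu^2 b_k + o(1)$, so $b_k/\norm{b_k}$ is an approximate eigenvector exhibiting $\mu^2 \in \sigma(T^*T)\backslash\{0\}$, i.e. $|\mu| \in \sigma(|T|)\backslash\{0\}$. Conversely, given $\mu^2 \in \sigma(T^*T)\backslash\{0\}$, choose unit vectors $b_k$ with $(T^*T - \mu^2)b_k \to 0$ and set $c_k = \mu^{-1}Tb_k$, $a_k = \lambda^{-1}S^n c_k$, $d_k = \lambda^{-1}S^n b_k$; using $\lambda^{-1} + \mu - \lambda = 0$ one checks that $R_{T,n}x_k - \lambda z_k = 0$ identically and $R_{T,n}^*z_k - \lambda x_k \to 0$, while $\norm{x_k}^2 + \norm{z_k}^2 = (1+\lambda^{-2})(\norm{b_k}^2 + \norm{c_k}^2) \to 2(1+\lambda^{-2})$, so after normalization Lemma \ref{LemmaSpectrum}(i) yields $\lambda \in \sigma(|B|) = \sigma(|R_{T,n}|)$. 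I expect the only genuine subtlety to be the nondegeneracy step in the forward direction—keeping $\norm{b_k}$ and $\norm{c_k}$ bounded away from $0$—together with noting that $\lambda = 1$ is precisely the excluded value making $\mu = 0$, which is what accounts for the ``$\backslash\{0\}$'' on the right-hand side of \eqref{eq-SpectralMapping}.
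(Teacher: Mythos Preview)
Your argument is correct. Both proofs rely on the same two lemmas, but you organize the reduction differently from the paper. The paper first treats the case where $T$ itself is $C$-symmetric for a conjugation $C$ commuting with $S$: then $R_{T,n}$ is already $\widetilde C$-symmetric for $\widetilde C = \left(\begin{smallmatrix}0&C\\C&0\end{smallmatrix}\right)$, and Lemma~\ref{LemmaSpectrum} applied to $R_{T,n}$ collapses directly to the antilinear problem $Ty_i - (\lambda-\lambda^{-1})Cy_i \to 0$ for $T$, yielding the slightly stronger statement \eqref{eq-Stronger}. For general $T$ the paper then symmetrizes at the level of $T$, passing to $\widetilde T = T\oplus CT^*C$ and recognizing $R_{\widetilde T,n}$ as unitarily equivalent to $R_{T,n}\oplus R_{CT^*C,n}$.

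You instead symmetrize once, at the level of $R_{T,n}$, via $B = R_{T,n}\oplus JR_{T,n}^*J$ for an arbitrary conjugation $J$, and unwind Lemma~\ref{LemmaSpectrum} for $B$ into the ordinary Weyl condition $(T^*T-\mu^2)b_k\to 0$ rather than an antilinear one for $T$. This is more direct for the general statement and avoids the intermediate $C$-symmetric case entirely; the cost is that you do not recover \eqref{eq-Stronger} (that $\lambda=1\in\sigma(|R_{T,n}|)\Leftrightarrow 0\in\sigma(|T|)$ when $T$ is $C$-symmetric), which the paper uses in its later examples. Your nondegeneracy step is fine: once $\|b_k\|^2+\|c_k\|^2$ is bounded below, if $\|b_k\|\to 0$ along a subsequence then $Tb_k\to 0$ forces $\mu c_k\to 0$ and hence $c_k\to 0$ (here $\mu\neq 0$ is essential), contradicting the lower bound; so no subsequence extraction is actually needed.
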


\begin{proof}
	Let $S$ denote the unilateral shift on $l^2_{\E}(\N)$ and 
	let $C$ denote a conjugation on $l^2_{\E}(\N)$ which commutes with both 
	$S$ and $S^*$ (e.g., suppose that $C$ is the canonical conjugation on $l^2_{\E}(\N)$).
	We first prove the theorem under the assumption
	that $T$ is a $C$-symmetric operator (i.e., $T = CT^*C$).
	In fact, we actually prove the slightly stronger statement
	\begin{equation}\label{eq-Stronger}
		\lambda \in \sigma( |R_{T,n}| ) \backslash \{0\}
		  \quad \Leftrightarrow \quad |\lambda - \lambda^{-1}| \in \sigma(|T|)
	\end{equation}	
	for such $T$.  The reader is cautioned, however, that
	Example \ref{ExampleFails} indicates that \eqref{eq-Stronger}
	does not necessarily hold without the hypothesis that $T$ is $C$-symmetric.
	In the general case, \eqref{eq-SpectralMapping}
	will be obtained from the complex symmetric case via a block matrix argument.
	\medskip

	\noindent$(\Rightarrow)$
	Under the hypothesis that $T$ is $C$-symmetric, one can easily verify
	that the operator $R_{T,n}$ is $\widetilde{C}$-symmetric 
	with respect to the conjugation
	\begin{equation}\label{eq-BlockConjugation}
		\widetilde{C} = \minimatrix{0}{C}{C}{0}
	\end{equation}
	on $l^2_{\E}(\N) \oplus l^2_{\E}(\N)$ (for a related computation, see \cite[Ex.5]{CSO2}).
	
	If $\lambda  \in \sigma ( |R_{T,n}| ) \backslash \{0\}$, then by Lemma \ref{LemmaSpectrum}
	there exists a sequence of unit vectors 
	$u_i \in \l^2_{\E}(\N)\oplus l^2_{\E}(\N)$ that satisfy the approximate antilinear
	eigenvalue problem
	\begin{equation}\label{eq-Limit}
		\lim_{i\rightarrow\infty} \norm{ R_{T,n} u_i - \lambda \widetilde{C} u_i } =0.
	\end{equation}
	Setting $u_i = (x_i,y_i)$ and using \eqref{eq-Limit},
	we see that
	\begin{equation}\label{eq-BigLimit}
		\lim_{i\rightarrow\infty} 
		\left\|  \twovector{ (S^*)^n x_i + Ty_i - \lambda Cy_i}{S^n y_i - \lambda Cx_i} \right\| 
		= 0.
	\end{equation}
	In particular, the components of \eqref{eq-BigLimit} tend to zero separately:
	\begin{align}
		\lim_{i\rightarrow\infty}  \norm{ (S^*)^n x_i + Ty_i - \lambda Cy_i} &= 0, \label{eq-Condition1}\\
		\lim_{i\rightarrow\infty}  \norm{ S^n y_i - \lambda Cx_i} &= 0.\label{eq-Condition2}
	\end{align}
	We may assume that the sequence $\norm{y_i}$ is bounded below since otherwise 
	\eqref{eq-Condition2} would assert that some subsequence of the sequence	
	$u_i = (x_i,y_i)$ of unit vectors converges to zero, a contradiction.

	Since $C$ is isometric and commutes with $S^*$, we also deduce from \eqref{eq-Condition2} that
	\begin{equation}\label{eq-Chain}
		\lim_{i\rightarrow\infty}  \norm{ (S^*)^n x_i - \lambda^{-1}C y_i} = 0.
	\end{equation}
	At this point  \eqref{eq-Condition1}, \eqref{eq-Chain}, and 
	the Triangle Inequality yield
	\begin{equation}\label{eq-Voila}
		\lim_{i\rightarrow\infty} \norm{Ty_i - (\lambda - \lambda^{-1}) Cy_i} = 0.
	\end{equation}
	Upon normalizing the vectors $y_i$ and appealing to Lemma \ref{LemmaSpectrum}, 
	it follows that $|\lambda - \lambda^{-1}| \in \sigma(|T|)$, as desired.
	\medskip
	
	\noindent$(\Leftarrow)$
	Now suppose that $\lambda  > 0$ and $|\lambda - \lambda^{-1}| \in \sigma(|T|)$.
	It follows easily from Lemma \ref{LemmaSpectrum} that there exists a sequence $y_i$ of unit vectors so that
	\eqref{eq-Voila} holds.  Let 
	\begin{equation}\label{eq-Define}
		x_i = \lambda^{-1} CS^n y_i
	\end{equation}
	for all $i \in \N$ and observe that \eqref{eq-Condition2} holds trivially.  Furthermore,
	\eqref{eq-Voila} and \eqref{eq-Define} together imply that \eqref{eq-Condition1} also holds.
	Upon normalizing the vector $u_i = (x_i,y_i)$ and noting that \eqref{eq-Limit}
	is satisfied, we conclude that $\lambda \in \sigma( |R_{T,n} |) \backslash \{0\}$.
	This concludes the proof of the theorem in the case that $T$ is $C$-symmetric.	
	\medskip
	
	We will use the preceding special case to prove that 
	\eqref{eq-SpectralMapping} holds in general.
	Suppose that $T \in B(l^2_{\E}(\N))$ and note that
	$(CAC)^* = CA^*C$ holds for any $A \in B(l^2_{\E}(\N))$.
	We then observe that the operator
	\begin{equation*}
		\widetilde{T} = \minimatrix{T}{0}{0}{CT^*C}
	\end{equation*}
	is $\widetilde{C}$-symmetric with respect to the conjugation \eqref{eq-BlockConjugation}
	on $l^2_{\E}(\N) \oplus l^2_{\E}(\N)$.
	Since 
	\begin{equation*}
		\sigma(T^*T) \cup \{0\}  = \sigma(TT^*) \cup \{0\}
	\end{equation*}
	by Lemma \ref{LemmaPair}, it follows immediately from the definition of $\widetilde{T}$ that
	\begin{equation}\label{eq-SameSpectrum}
		\sigma( |T| ) \cup \{0\} = \sigma( |\widetilde{T}| ) \cup \{0\}.
	\end{equation}
	Letting $\widetilde{S} = S \oplus S$\
	denote the unilateral shift on $l^2_{\E}(\N) \oplus l^2_{\E}(\N)$, a short computation
	reveals that the Foguel operator	
	\begin{equation}\label{eq-SmallBlock}
		R_{\widetilde{T},n}
		=\minimatrix{ (\widetilde{S}^*)^n }{ \widetilde{T} }{0}{\widetilde{S}^n}
	\end{equation}
	is $(\widetilde{C} \oplus \widetilde{C})$-symmetric.  
	In light of the obvious isomorphism between $l^2_{\E}(\N) \oplus l^2_{\E}(\N)$ and
	$l^2_{\E\oplus \E}(\N)$, it follows from
	the first portion of this proof that 
	\begin{equation}\label{eq-Key03}
		\lambda \in \sigma( |R_{\widetilde{T},n}|) \backslash \{0\}
		\quad\Leftrightarrow\quad |\lambda - \lambda^{-1}| \in \sigma( |\widetilde{T}|).
	\end{equation}
	In particular, we note that $1\in \sigma( |R_{\widetilde{T},n}|) \backslash \{0\}$
	if and only if $0 \in \sigma( |\widetilde{T}|)$.
	
	Next, we observe that \eqref{eq-SmallBlock} may be written in the form
	\begin{equation*}%\label{eq-BigBlock}
		R_{\widetilde{T},n}=
		\left(
		\begin{array}{cc|cc}
			(S^*)^n & 0 & T & 0 \\
			0 & (S^*)^n & 0 & CT^*C \\
			\hline
			0 & 0 & S^n & 0 \\
			0 & 0 & 0 &S^n
		\end{array}
		\right),
	\end{equation*}
	where each entry is an operator on $l^2_{\E}(\N)$.
	The preceding is clearly unitarily equivalent to the operator
	\begin{equation*}%\label{eq-Equivalent}
		\left(
		\begin{array}{cc|cc}
			(S^*)^n & T & 0 & 0 \\
			0 & S^n & 0 & 0 \\
			\hline
			0 & 0 & (S^*)^n & CT^*C\\
			0 & 0 & 0 &S^n
		\end{array}
		\right) 
		=
		R_{T,n} \oplus R_{CT^*C,n}
	\end{equation*}
	whence we see that
	\begin{equation}\label{eq-Key01}
		\sigma( | R_{\widetilde{T},n} |)  = \sigma( |R_{T,n}| ) \cup \sigma( |R_{CT^*C,n} | ).
	\end{equation}
	On the other hand, a straightforward computation reveals that
	\begin{equation*}
		R_{T,n} = \widetilde{C} R_{CT^*C,n}^* \widetilde{C}
	\end{equation*}
	from which it follows that
	\begin{equation*}
		R_{T,n}^* R_{T,n} = \widetilde{C} (R_{CT^*C,n} R_{CT^*C,n}^*) \widetilde{C}.
	\end{equation*}
	Since $\widetilde{C}$ is a conjugation on $l^2_{\E}(\N) \oplus l^2_{\E}(\N)$ it follows easily 
	from the preceding equation that
	$\sigma(R_{T,n}^* R_{T,n}) = \sigma(R_{CT^*C,n} R_{CT^*C,n}^*)$.
	However, Lemma \ref{LemmaPair} asserts that the nonzero elements of the spectra of
	$R_{CT^*C,n} R_{CT^*C,n}^*$ and $R_{CT^*C,n}^* R_{CT^*C,n}$ are the same whence
	\begin{equation}\label{eq-Key02}
		\sigma( |R_{T,n}| ) = \sigma( |R_{CT^*C,n}| )
	\end{equation}
	since $0$ belongs to the spectrum of the modulus of any Foguel operator.
	
	Putting \eqref{eq-Key01} and \eqref{eq-Key02}  together we find that	
	$\sigma( | R_{\widetilde{T},n} |) = \sigma( |R_{T,n}| )$.
	Thus by \eqref{eq-SameSpectrum} and \eqref{eq-Key03} we obtain
	\eqref{eq-SpectralMapping} as desired.
\end{proof}

\begin{Corollary}\label{CorollarySpectra}
	 $\sigma( |R_{T,n}| )$ is discrete if and only if $\sigma(|T|)$ is discrete.
\end{Corollary}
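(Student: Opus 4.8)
The plan is to deduce the corollary directly from the spectral mapping relation \eqref{eq-SpectralMapping}. The engine is the function $\phi(\lambda) = |\lambda - \lambda^{-1}|$ on $(0,\infty)$: it is continuous, it vanishes only at $\lambda = 1$, it is strictly decreasing on $(0,1)$ with $\phi\bigl((0,1)\bigr) = (0,\infty)$, and it is strictly increasing on $(1,\infty)$ with $\phi\bigl((1,\infty)\bigr) = (0,\infty)$. Hence $\phi$ restricts to a homeomorphism of each of the intervals $(0,1)$ and $(1,\infty)$ onto $(0,\infty)$. Solving $\mu = \lambda - \lambda^{-1}$ for $\lambda > 0$ shows that the corresponding inverse branches are $\psi_{\pm}(\mu) = \tfrac{1}{2}\bigl(\pm\mu + \sqrt{\mu^2+4}\,\bigr)$, with $\psi_{+}$ mapping $(0,\infty)$ onto $(1,\infty)$ and $\psi_{-}$ mapping $(0,\infty)$ onto $(0,1)$, each extending continuously by $\psi_{\pm}(0) = 1$. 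Since $|R_{T,n}|$ is a positive operator, $\sigma(|R_{T,n}|) \subseteq [0,\infty)$, and $0 \in \sigma(|R_{T,n}|)$ always, so $\sigma(|R_{T,n}|)\setminus\{0,1\}$ is a subset of $(0,1)\cup(1,\infty)$; by Theorem \ref{TheoremMain} it is precisely $\phi^{-1}\bigl(\sigma(|T|)\setminus\{0\}\bigr)$. Concretely, $\sigma(|R_{T,n}|)\cap(0,1) = \psi_{-}\bigl(\sigma(|T|)\setminus\{0\}\bigr)$ and $\sigma(|R_{T,n}|)\cap(1,\infty) = \psi_{+}\bigl(\sigma(|T|)\setminus\{0\}\bigr)$, while $\phi$ carries $\sigma(|R_{T,n}|)\setminus\{0,1\}$ onto $\sigma(|T|)\setminus\{0\}$.

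I would then reduce the notion of discreteness to finiteness: since $|R_{T,n}|$ and $|T|$ are bounded self-adjoint operators, their spectra are compact subsets of $\R$, and a compact subset of $\R$ is discrete (that is, consists of isolated points) if and only if it is finite. So it suffices to show that $\sigma(|R_{T,n}|)$ is finite if and only if $\sigma(|T|)$ is finite. If $\sigma(|T|)$ is finite, then so is $\sigma(|T|)\setminus\{0\}$, hence so are its images under $\psi_{+}$ and $\psi_{-}$, and therefore $\sigma(|R_{T,n}|) \subseteq \{0,1\}\cup\psi_{+}\bigl(\sigma(|T|)\setminus\{0\}\bigr)\cup\psi_{-}\bigl(\sigma(|T|)\setminus\{0\}\bigr)$ is finite. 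Conversely, if $\sigma(|R_{T,n}|)$ is finite, then $\sigma(|R_{T,n}|)\setminus\{0,1\}$ is finite, and its image under the continuous map $\phi$, which equals $\sigma(|T|)\setminus\{0\}$, is then finite, so $\sigma(|T|)$ is finite as well.

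I do not expect a genuine obstacle here: the corollary is little more than a restatement of Theorem \ref{TheoremMain} via the change of variable $\lambda \mapsto |\lambda - \lambda^{-1}|$. The only steps requiring care are the elementary monotonicity and surjectivity properties of $\phi$ on the two intervals $(0,1)$ and $(1,\infty)$ that produce the inverse branches $\psi_{\pm}$, and the bookkeeping with the exceptional values $0$ and $1$; in particular, whether or not $1$ actually belongs to $\sigma(|R_{T,n}|)$ is immaterial, since $\{0,1\}$ is a fixed finite set that may be freely adjoined or deleted without affecting finiteness.
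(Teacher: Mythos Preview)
Your proposal is correct. The paper states this corollary without proof, treating it as an immediate consequence of Theorem~\ref{TheoremMain}; your argument is precisely the natural way to make that implication explicit, and the bookkeeping with the exceptional points $0$ and $1$ and the reduction of ``discrete'' to ``finite'' via compactness are handled properly.
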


The behavior of the spectral mapping function $f(x) = |x - x^{-1}|$ is
illustrated in Figure \ref{FigureGraph}.  In particular, notice the sharp
transition at $x = 1$.
\begin{figure}[htb!]
	\begin{center}
		\includegraphics[width=2.5in]{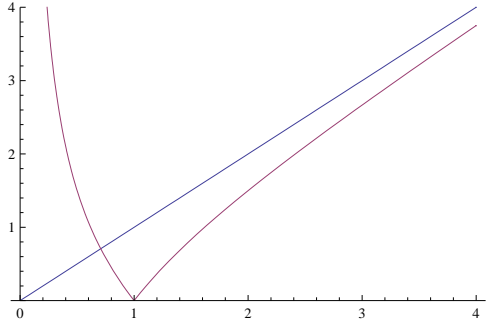}
		\caption{Graphs of $y = x$ and $y = |x - x^{-1}|$ }
		\label{FigureGraph}
	\end{center}
\end{figure}
Moreover, we see that elements in the spectrum of $|R_{T,n}|$
which are small and positive correspond to very large elements of the
spectrum of $|T|$.  

\medskip

As an immediate corollary of Theorem \ref{TheoremMain}, we have
an exact formula for the norm of a Foguel operator.
Somewhat surprisingly, the norm is independent of the parameter $n$.

\begin{Corollary}\label{CorollaryNorm}
	If $T \in B(l_{\E}^2(\N))$ and $n \geq 0$, then
	\begin{equation}\label{eq-Exact}
		\norm{R_{T,n}} = \frac{ \norm{T} + \sqrt{ \norm{T}^2 + 4} }{2}.
	\end{equation}	
\end{Corollary}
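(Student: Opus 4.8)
The plan is to translate the statement into a question about the spectrum of the modulus and then read off the answer from Theorem \ref{TheoremMain}. Recall that for any bounded operator $A$ the modulus $|A|$ is positive, so $\norm{A} = \norm{\,|A|\,} = \max\sigma(|A|)$, the maximum being attained since $\sigma(|A|)$ is a compact subset of $[0,\infty)$; in particular $\norm{T} = \max\sigma(|T|)$ as well. Thus the corollary is equivalent to the assertion that $\max\sigma(|R_{T,n}|) = \tfrac{1}{2}\bigl(\norm{T} + \sqrt{\norm{T}^2+4}\bigr)$, and this is what I would establish.

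First I would dispose of the degenerate case $T = 0$, equivalently $\norm{T} = 0$: here $R_{T,n} = (S^*)^n \oplus S^n$, which has norm $1$, in agreement with the right-hand side of \eqref{eq-Exact} since $\tfrac{1}{2}(0 + \sqrt{4}) = 1$. For the remaining case $\norm{T} > 0$ I would introduce the function $g(\mu) = \tfrac{1}{2}\bigl(\mu + \sqrt{\mu^2+4}\bigr)$ and record its elementary properties: $g$ is continuous and strictly increasing on $[0,\infty)$, it is the inverse of the strictly increasing bijection $x \mapsto x - x^{-1}$ from $[1,\infty)$ onto $[0,\infty)$, and consequently $g(\mu) > 1$ whenever $\mu > 0$. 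In other words, $g(\mu)$ is precisely the unique $\lambda > 1$ with $\lambda - \lambda^{-1} = \mu$.

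With these preliminaries the argument is short. Since $\norm{T} = \max\sigma(|T|)$ and $\norm{T} > 0$, we have $\norm{T} \in \sigma(|T|)\backslash\{0\}$, so Theorem \ref{TheoremMain} immediately puts $\lambda^* := g(\norm{T}) = \tfrac{1}{2}\bigl(\norm{T} + \sqrt{\norm{T}^2+4}\bigr)$ into $\sigma(|R_{T,n}|)$, and moreover $\lambda^* > 1$. Conversely, I would show $\lambda \leq \lambda^*$ for every $\lambda \in \sigma(|R_{T,n}|)$: if $\lambda \leq 1$ this is clear since $\lambda^* > 1$, while if $\lambda > 1$ then $\lambda \notin \{0,1\}$, so Theorem \ref{TheoremMain} forces $\lambda - \lambda^{-1} = |\lambda - \lambda^{-1}| \in \sigma(|T|)$, hence $\lambda - \lambda^{-1} \leq \norm{T}$, and applying the increasing function $g$ to both sides yields $\lambda \leq g(\norm{T}) = \lambda^*$. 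Therefore $\norm{R_{T,n}} = \max\sigma(|R_{T,n}|) = \lambda^*$, which is \eqref{eq-Exact}; note in passing that this expression does not involve $n$, as claimed.

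I do not expect a genuine obstacle here, since the corollary is essentially immediate from Theorem \ref{TheoremMain}. The only points requiring a little care are the bookkeeping around the two branches of the spectral mapping function $x \mapsto |x - x^{-1}|$ — one must pass through the branch on $(1,\infty)$, since the small positive part of $\sigma(|R_{T,n}|)$ pulls back to the large part of $\sigma(|T|)$ and is irrelevant to the supremum — together with the harmless edge cases $T = 0$ and $\lambda = 1$, the latter excluded by the theorem but inconsequential because $\lambda^* > 1$.
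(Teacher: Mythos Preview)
Your proposal is correct and follows essentially the same route as the paper: both deduce the formula from Theorem \ref{TheoremMain} together with the monotonicity of $x\mapsto x-x^{-1}$ on $[1,\infty)$ (equivalently, of its inverse $g$). The paper compresses the argument into the single equation $\norm{T}=\norm{R_{T,n}}-\norm{R_{T,n}}^{-1}$ and then solves, whereas you spell out the two inequalities and the edge cases $T=0$ and $\lambda\leq 1$ explicitly; the substance is the same.
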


\begin{proof}
	Since $\norm{R_{T,n}}\geq 1$, it follows from Theorem \ref{TheoremMain}
	and the monotonicity of the function $f(x) = x - x^{-1}$ on the interval $[1,\infty)$ that
	\begin{equation*}
		\norm{T} = \norm{R_{T,n}} - \norm{R_{T,n}}^{-1}.
	\end{equation*}
	Solving the preceding equation for $\norm{R_{T,n}}$ yields
	the desired formula \eqref{eq-Exact}
	(note that the positive root of the resulting quadratic 
	is taken since $\norm{R_{T,n}}  \geq 1$).
\end{proof}

\begin{Example}
	Let us consider the operator introduced by Halmos \cite{HalmosFoguel}
	in his simplification of Foguel's solution \cite{Foguel} to Sz.-Nagy's problem \cite{SZ}.
	Let $\J = \{ 3^n : n  \in \N\}$ and observe that $\J$ has the property that
	$i,j \in \J$ and $i < j$ together imply that $2i < j$.
	Let $P: l^2(\N) \rightarrow l^2(\N)$ be the orthogonal projection
	onto the closed linear span of $\{ e_j: j \in \J\}$ and let	
	\begin{equation*}
		R_P = \minimatrix{S^*}{P}{0}{S}.
	\end{equation*}
	A short combinatorial argument shows that
	\begin{equation*}
		R^n_P = \minimatrix{(S^*)^n}{P_n}{0}{S^n}
	\end{equation*}
	for $n \geq 1$,	where 
	\begin{equation*}
		P_n = \sum_{j=0}^{n-1} (S^*)^j P S^{n-1-j}
	\end{equation*}
	is a \emph{partial isometry} (this is due to the sparseness of the set $\J$).  
	Since $\norm{ P_n} = 1$, it follows from
	Corollary \ref{CorollaryNorm} that
	\begin{equation*}
		\norm{  R^n_P} = \frac{1 + \sqrt{5}}{2}
	\end{equation*}
	for $n \geq 1$.  In particular, Halmos' operator $R_P$ is more than just
	power bounded -- its positive integral powers all have norm equal to the Golden Ratio.
\end{Example}

Along similar lines, 
Theorem \ref{TheoremMain} also permits sharp upper bounds on the norms of \emph{powers}
of Foguel operators:

\begin{Corollary}\label{CorollaryPowers}
	If $T \in B(l^2_{\E}(\N))$ and $n \geq 1$, then
	\begin{equation}\label{eq-InequalityGeneral}
		\norm{ R_{T}^n} \leq \frac{ n\norm{T} + \sqrt{ n^2\norm{T}^2 + 4 }}{2}.
	\end{equation}
	The inequality \eqref{eq-InequalityGeneral} is sharp in the sense that
	for each $n \geq 1$, there exists an operator $T$ for which equality is attained.	
\end{Corollary}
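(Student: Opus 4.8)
The plan is to reduce the computation of $\norm{R_T^n}$ to the setting of Theorem \ref{TheoremMain} by recognizing $R_T^n$ as a Foguel operator of order $n$. A direct block-matrix computation shows that
\begin{equation*}
	R_T^n = \minimatrix{(S^*)^n}{T_n}{0}{S^n},
\end{equation*}
where $T_n = \sum_{j=0}^{n-1} (S^*)^j T S^{n-1-j}$; this is the same combinatorial identity that appears in the Halmos example above, proved by an easy induction on $n$ using the $2\times 2$ block multiplication rule. Thus $R_T^n = R_{T_n,n}$ in the notation of \eqref{eq-nthFoguel}, and Corollary \ref{CorollaryNorm} (applied with $T_n$ in place of $T$ and with the order equal to $n$) gives
\begin{equation*}
	\norm{R_T^n} = \norm{R_{T_n,n}} = \frac{\norm{T_n} + \sqrt{\norm{T_n}^2 + 4}}{2}.
\end{equation*}
Since the function $t \mapsto \tfrac{1}{2}(t + \sqrt{t^2+4})$ is increasing on $[0,\infty)$, the inequality \eqref{eq-InequalityGeneral} follows once we establish the norm estimate $\norm{T_n} \leq n \norm{T}$, which is immediate from the triangle inequality together with the facts that $S$ and $S^*$ are contractions: each of the $n$ summands $(S^*)^j T S^{n-1-j}$ has norm at most $\norm{T}$.

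For the sharpness claim, I would exhibit, for each fixed $n \geq 1$, an operator $T$ for which $\norm{T_n} = n\norm{T}$, since then both inequalities above become equalities and \eqref{eq-InequalityGeneral} is attained. The natural candidate is to take $\E$ infinite dimensional and choose $T$ so that the $n$ shifted copies $(S^*)^j T S^{n-1-j}$ act on mutually orthogonal pieces of the space and add coherently on a common test vector; concretely one can arrange $T$ to be a suitable weighted rank-one or diagonal-type operator (in the spirit of the sparse projection $P$ in the Halmos example, but with the sparseness tuned to $n$ rather than forcing a partial isometry). One then checks directly that $\norm{T_n y} = n \norm{T} \norm{y}$ for an appropriate unit vector $y$, giving $\norm{T_n} \geq n\norm{T}$ and hence equality.

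The routine part is the block identity for $R_T^n$ and the bound $\norm{T_n}\leq n\norm{T}$; the main obstacle is the sharpness construction, where one must produce an operator $T$ whose $n$ shifts genuinely reinforce one another so that no cancellation occurs in $T_n = \sum_{j=0}^{n-1}(S^*)^j T S^{n-1-j}$. The key is a careful choice of supports: one wants $S^{n-1-j} y$, for $j = 0,\dots,n-1$, to land $T$ in pairwise-disjoint coordinate blocks, and then to reassemble under the powers of $S^*$ so that all $n$ contributions stack on the same output coordinates with the same sign.
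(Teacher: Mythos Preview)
Your derivation of the inequality is exactly the paper's: you identify $R_T^n = R_{T_n,n}$ with $T_n=\sum_{j=0}^{n-1}(S^*)^jTS^{n-1-j}$, invoke Corollary~\ref{CorollaryNorm}, and bound $\norm{T_n}\le n\norm{T}$ by the triangle inequality. That part is complete.

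The sharpness argument, however, is not a proof but a wish list. You never exhibit a concrete $T$, and the heuristic you sketch is internally in tension: you ask that the vectors $T S^{n-1-j}y$ be supported on \emph{pairwise disjoint} coordinate blocks, yet after applying $(S^*)^j$ you want all $n$ contributions to land on the \emph{same} output coordinates. Since $(S^*)^j$ merely shifts indices down by $j$, this forces the original supports to be at consecutive positions rather than genuinely disjoint blocks, so the ``sparse, Halmos-style'' picture you invoke does not quite fit. At minimum you would need to write down the operator and the test vector and actually compute $\norm{T_n y}$; as it stands the construction is not specified well enough to verify.

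The paper sidesteps all of this with a much simpler choice: take $T=I$. Then $T_n=\sum_{j=0}^{n-1}(S^*)^jS^{n-1-j}=S^{n-1}+S^{n-3}+\cdots+(S^*)^{n-1}$ is selfadjoint, and testing against the normalized reproducing-kernel vectors $k_r=\sqrt{1-r^2}\,(1,r,r^2,\ldots)$ gives $\langle T_n k_r,k_r\rangle\to n$ as $r\to 1^-$, so $\norm{T_n}=n=n\norm{I}$. A single operator works for every $n$, and no combinatorics is needed.
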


\begin{proof}
	By induction, we have
	\begin{equation*}
		R_T^n = \minimatrix{S^*}{T}{0}{S}^n = \minimatrix{(S^*)^n}{  T_n }{0}{S^n}
	\end{equation*}
	where
	\begin{equation}\label{eq-Sum}
		T_n = \sum_{j=0}^{n-1} (S^*)^j T S^{n-1-j}.
	\end{equation}
	Since \eqref{eq-Sum} implies that $\norm{T_n} \leq n \norm{T}$, it follows from \eqref{eq-Exact} that
	\begin{equation}\label{eq-Attained}
		\norm{R_T^n}
		= \frac{ \norm{T_n} + \sqrt{ \norm{T_n}^2 + 4 }}{2} 
		\leq  \frac{ n\norm{T} + \sqrt{ n^2\norm{T}^2 + 4 }}{2}, 
	\end{equation}
	which is the desired inequality \eqref{eq-InequalityGeneral}.
			
	We claim that equality is attained in \eqref{eq-InequalityGeneral} if $T = I$.
	In this case, \eqref{eq-Sum} reduces to
	\begin{align*}
		T_n 
		&= \sum_{j=0}^{n-1} (S^*)^j S^{n-1-j}\\
		&= S^{n-1} + S^{n-3} + \cdots + (S^*)^{n-3} + (S^*)^{n-1}
	\end{align*}
	whence $\norm{T_n} \leq n$.  On the other hand, letting
	\begin{equation*}
		k_r =  \sqrt{1 - r^2}\,(1,\, r ,\, r^2,\,\ldots)
	\end{equation*}
	for $r \in [0,1)$ and noting that each operator
	$T_n$ is selfadjoint, we find that
	\begin{align*}
		\lim_{r\rightarrow 1^-} \inner{T_n k_r,k_r}
		&= \lim_{r\rightarrow 1^-} \left( \inner{S^{n-1}k_r,k_r} + \inner{S^{n-3}k_r,k_r} 
			+ \cdots + \inner{(S^*)^{n-1}k_r,k_r} \right)\\
		&= \lim_{r\rightarrow 1^-} (r^{n-1} + r^{n-3} + \cdots + r^{n-3} + r^{n-1} )\\
		&= n
	\end{align*}
	which shows that $\norm{T_n} =n$.  It follows from \eqref{eq-Attained} and the fact that 
	$\norm{T} = \norm{I} = 1$ that equality holds in \eqref{eq-InequalityGeneral}.
\end{proof}

\begin{Example}
	If $T \in B( l^2(\N))$ is a contraction, we have the sharp inequality
	\begin{equation*}
		\norm{ \minimatrix{S^*}{T}{0}{S}^n} \,\leq\, \frac{n + \sqrt{n^2+4}}{2}.
	\end{equation*}
	In particular,
	\begin{equation*}
		\norm{ \minimatrix{S^*}{T}{0}{S}} \,\leq\, \frac{1 + \sqrt{5}}{2}.
	\end{equation*}
\end{Example}

We conclude with two examples which illustrate the relationship between conditions
\eqref{eq-SpectralMapping} and \eqref{eq-Stronger}. 
The following example shows that \eqref{eq-SpectralMapping} cannot in general be
improved to \eqref{eq-Stronger} if the operator $T$ is not $C$-symmetric:

\begin{Example}\label{ExampleFails}
	It is well known and easy to show that the unilateral shift $S$
	is not a complex symmetric operator
	 \cite[Ex. 2.14]{CCO}, or \cite[Cor. 7]{MUCFO}.
	Let us consider the Foguel operator $R_S$.
	
	Since $S^*S = I$, it is clear that 
	$0 \notin \sigma(|S|)$.  On the other hand, a short computation reveals that
	$1 \in \sigma(|R_S|)$.  Indeed, observe that
	\begin{equation*}
		R_S^* R_S = \minimatrix{S}{0}{S^*}{S^*} \minimatrix{S^*}{S}{0}{S}
		= \minimatrix{SS^*}{S^2}{(S^*)^2}{2I}
	\end{equation*}
	and 
	\begin{equation*}
		\minimatrix{SS^*}{S^2}{(S^*)^2}{2I} \twovector{e_1}{0} = \twovector{e_1}{0}
	\end{equation*}
	where $e_1 = (0,1,0,0,\ldots)$.  In particular, the fact that $0 \notin \sigma(|S|)$
	while $1 \in \sigma(|R_S|)$ indicates that \eqref{eq-Stronger} does not
	hold in general.   
	The astute reader will observe that the trouble stems 
	from \eqref{eq-SameSpectrum}, which cannot in general be improved to yield 
	$\sigma(|T|) = \sigma( |\widetilde{T}|)$ for arbitrary $T \in B(\h)$.
\end{Example}

\begin{Example}
	Unlike the unilateral shift, the identity operator is $C$-symmetric with respect to the canonical
	conjugation on $l^2(\N)$.
	According to \eqref{eq-Stronger}, it follows that 
	$1 \notin \sigma(|R_I |)$ since
	$0 = 1-1^{-1}\notin \sigma(I)$.
	This can be verified directly by using the formula
	\begin{equation}\label{eq-RR}
		R_I^* R_I = \minimatrix{S}{0}{I}{S^*} \minimatrix{S^*}{I}{0}{S}
		= \minimatrix{SS^*}{S}{S^*}{2I}
	\end{equation}
	and Weyl's criterion \cite[Thm. VII.12]{RS}.  
	Indeed, suppose toward a contradiction that there exists a sequence of unit vectors
	$u_n = (x_n,y_n)$ such that 
	\begin{equation*}
		\lim_{n\rightarrow\infty} ( |R_I|u_n - u_n ) = 0.
	\end{equation*}
	By \eqref{eq-RR}, this is equivalent to asserting that
	\begin{align}
		\lim_{n\rightarrow\infty} (SS^*x_n + Sy_n - x_n) &=0, \label{eq-RR01} \\
		\lim_{n\rightarrow\infty} (S^*x_n + y_n)   &=0. \label{eq-RR02}
	\end{align}
	Applying $S$ to \eqref{eq-RR02}, we find that
	$\lim_{n\rightarrow\infty} (SS^*x_n + Sy_n) = 0$ from which 
	$x_n \to 0$ follows in light of \eqref{eq-RR01}.  Since $x_n \to 0$, we conclude from 
	\eqref{eq-RR02} that $y_n \to 0$ as well.  However, this implies that the sequence of 
	unit vectors $u_n = (x_n,y_n)$ tends to zero, a contradiction.
\end{Example}

\end{document}